\documentclass[reqno]{amsart}
\usepackage{mathrsfs}
\usepackage{amssymb}
\usepackage{amscd}
\theoremstyle{plain}
\newtheorem{theorem}{Theorem}[section]
\newtheorem{lemma}{Lemma}[section]

\begin{document}
\title{On rational functional identities involving inverses on matrix rings}
\thanks{$^*$ Corresponding author}
\keywords{Functional identity, inverse, division ring, matrix ring}
\subjclass[2010]{16R60, 16K40}
\maketitle
\begin{center}
Yingyu Luo\\
College of Mathematics, Changchun Normal University, Changchun
130032, China\\
E-mail: luoyingyu1980@163.com\\
Qian Chen\\
Department of Mathematics, Shanghai Normal University,
Shanghai 200234, China.\\
Email address: qianchen0505@163.com\\
Yu Wang$^*$\\
Department of Mathematics, Shanghai Normal University,
Shanghai 200234, China.\\
Email address: ywang2004@126.com
\end{center}
\maketitle

\begin{abstract}
Let $n\geq 3$ be an integer. Let $\mathcal{D}$ be a division ring with char$(\mathcal{D})>n$ or char$(\mathcal{D})=0$. Let $\mathcal{R}=M_m(\mathcal{D})$ be a ring of $n\times n$ matrices over $D$, $m\geq 2$. The main theorem in the paper states that the only additive
maps $f$ and $g$ satisfying that $f(X)+X^ng(X^{-1})=0$ for all invertible $X\in \mathcal{R}$, are zero maps, which generalizes both a result proved by Dar and Jing and a result proved by  Catalano and Merch$\acute{a}$n.
\end{abstract}

\section{Introduction}
Let $\mathcal{R}$ be an associative ring. Throughout the paper we will denote by $\mathcal{R}^{\times}$ the set of all invertible elements of $\mathcal{R}$. There has been a great interest in the study of functional identities on rings in the last decades (see \cite{Bre} for details). Most
functional identities deal with identities involving arbitrary elements in a ring. It
seems that the first result on identities involving inverses of elements is due to
Vukman who proved the following result in 1987.

\begin{theorem}\cite[Theorem]{Vuk}
Let $\mathcal{D}$ be a skew field of characteristic not
two and let $f:\mathcal{D}\rightarrow \mathcal{D}$ be an additive mapping such that
\[
f(x)+x^2f(x^{-1})=0
\]
for all $x\in \mathcal{D}^{\times}$. Then $f=0$.
\end{theorem}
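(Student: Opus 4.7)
The plan is to reduce Vukman's functional identity to a Jordan derivation equation on $\mathcal{D}$, apply Herstein's theorem to upgrade $f$ to an ordinary derivation, and then combine the Leibniz rule with the original hypothesis to force $f\equiv 0$, making crucial use of the noncommutativity of $\mathcal{D}$ in the final step.

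Setting $x=1$ in the hypothesis gives $2f(1)=0$, so $f(1)=0$ by the characteristic assumption, and the identity rewrites as $f(x^{-1}) = -x^{-2}f(x)$ for every $x\in\mathcal{D}^{\times}$. The central algebraic tool is Hua's identity
\[
x-xyx = \bigl(x^{-1}+(y^{-1}-x)^{-1}\bigr)^{-1},
\]
valid whenever $x,y,y^{-1}-x \in \mathcal{D}^{\times}$. Applying $f$ to both sides and using the displayed rewriting at each of the three inverse terms expresses $f(xyx)$ as a rational expression in $x,y,f(x),f(y)$. Specializing $y=1$ (so that $x$ and $1-x$ commute and the rational expression collapses) yields $f(x^{2}) = 2xf(x)$ for every $x \in \mathcal{D}^{\times}\setminus\{1\}$, and the extension to all of $\mathcal{D}$ is immediate from $f(0)=f(1)=0$. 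A mirror computation -- either by applying the specialization to $x^{-1}$ and translating back with the hypothesis, or by exploiting the left-right symmetric form of Hua's identity -- should give $f(x^{2}) = 2f(x)x$, hence $[f(x),x]=0$, and linearization then produces the Jordan derivation relation $f(xy+yx)=xf(y)+f(y)x+yf(x)+f(x)y$.

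Since $\mathcal{D}$ is a prime ring of characteristic not two, Herstein's classical theorem upgrades $f$ to an ordinary derivation, so $f(xy)=f(x)y+xf(y)$ for all $x,y$. Differentiating $x\cdot x^{-1}=1$ gives $f(x^{-1}) = -x^{-1}f(x)x^{-1}$, and substituting into the hypothesis yields $f(x)=xf(x)x^{-1}$, i.e., $[f(x),x]=0$ for every $x\in\mathcal{D}$. Linearizing produces $[f(x),y]=[x,f(y)]$, and combining this with the Leibniz rule applied to $[f(xy),xy]=0$ leads, after a short manipulation, to $f(x)[x,y]=0$ for all $x,y\in\mathcal{D}$. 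Since $\mathcal{D}$ is noncommutative, every noncentral element $x$ admits some $y$ with $[x,y]\ne 0$, forcing $f(x)=0$ off the center; writing any central element as a sum of two noncentral elements and invoking additivity then gives $f\equiv 0$.

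The main obstacle is the Hua-identity step: the left-asymmetric form $f(x^{-1}) = -x^{-2}f(x)$ of the hypothesis means that the mirror relation $f(x^{2}) = 2f(x)x$ does not follow by syntactic symmetry, so extracting the commutation $[f(x),x]=0$ requires either a second Hua-type specialization or a separate commutator argument. Once the Jordan derivation property is in hand, the remaining applications of Herstein's theorem and of standard prime-ring commutator manipulations are routine.
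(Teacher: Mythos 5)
First, a point of reference: the paper does not prove this statement at all --- Theorem 1.1 is quoted from Vukman's 1987 paper purely as motivation, so there is no in-paper proof to compare yours against, and what follows assesses your argument on its own terms. The first half of your proposal is correct: $f(1)=0$, the rewriting $f(z^{-1})=-z^{-2}f(z)$, and the Hua specialization $a-a^{2}=(a^{-1}+(1-a)^{-1})^{-1}$ combine (using that $a$ and $1-a$ commute) to give $f(a^{2})=2af(a)$ for all $a$. The genuine gap is exactly the one you flag and then do not close: the passage to $f(x^{2})=2f(x)x$, equivalently $[f(x),x]=0$. The hypothesis $f(x)+x^{2}f(x^{-1})=0$ is intrinsically left-sided, and every application of it through Hua's identity puts the coefficient on the \emph{left} of $f$; there is no ``mirror'' specialization, and the full form $a-aba$ of Hua's identity does not help because the resulting rational coefficients do not collapse into right multiplications. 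Without $[f(x),x]=0$, the relation $f(x^{2})=2xf(x)$ is not the Jordan derivation condition $f(x^{2})=xf(x)+f(x)x$, so Herstein's theorem cannot be invoked, and everything downstream (the Leibniz rule, $f(x^{-1})=-x^{-1}f(x)x^{-1}$, the Posner-type commutator manipulations) is unavailable. As written, the argument stops at $f(a^{2})=2af(a)$.

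The gap is fillable, but by a different mechanism than the one you sketch, and without Herstein. Write $u\circ v=uv+vu$. Linearizing $f(a^{2})=2af(a)$ gives $f(x\circ y)=2xf(y)+2yf(x)$; since $x\circ(x\circ y)=x^{2}\circ y+2xyx$, subtracting yields $f(xyx)=x^{2}f(y)+3xyf(x)-yxf(x)$. Evaluating $f(xyxy+yxyx)$ both as $f(y\circ(xyx))$ and as $f(x\circ(yxy))$ and cancelling gives $[[x,y],y]f(x)=[[y,x],x]f(y)$; linearizing this in $y$ and substituting the new variable by $x$ gives $2[[x,y],x]f(x)=0$, hence $[[x,y],x]f(x)=0$ for all $x,y$. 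In a division ring this forces, for each $x$, either $f(x)=0$ or $[x,[x,y]]=0$ for all $y$, and in characteristic not two the latter makes $x$ central (the inner derivation $\mathrm{ad}(x)$ squares to zero, hence vanishes). So $f$ vanishes off the centre, and since $\mathcal{D}$ is noncommutative every central element is a sum of two noncentral ones, whence $f=0$ by additivity --- your closing step. Note that noncommutativity is essential rather than incidental here: on a commutative field any derivation satisfies the hypothesis, so no correct proof can avoid using it.
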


In 2018, Catalano \cite{Cat} considered the following rational identity in
the context of a division ring $\mathcal{D}$:
\[
f(x)x^{-1}+xg(x^{-1})=0
\]
for every $x\in \mathcal{D}^{\times}$, and where $f,g:\mathcal{D}\rightarrow \mathcal{D}$ are additive maps. She proved that $f(x)=xq+d(x)$ and $g(x)=-qx+d(x)$,
where $d:\mathcal{D}\rightarrow \mathcal{D}$ is a derivation and $q\in \mathcal{D}$ is a fixed element. In the same year, Catelano \cite{Cat2} extended the main result in \cite{Cat}. In 2020, Argac etc \cite{Lee} generalized the main results in \cite{Cat,Cat2}.

In 2023,  Dar and Jing \cite{Jing} generalized Vukman' result and obtained the following two results:

\begin{theorem}\cite[Theorem 1.4]{Jing}
Let $\mathcal{D}$ be a division ring which is not a field with characteristic
different from $2$ and let $f,g:\mathcal{D}\rightarrow \mathcal{D}$ be additive maps satisfying the identity
\[
f(x)+x^2g(x^{-1})=0
\]
for all $x\in \mathcal{D}^{\times}$. Then $f(x)=xq$ and $g(x)=-xq$ for all $x\in \mathcal{D}$, where $q\in \mathcal{D}$ is a fixed element.
\end{theorem}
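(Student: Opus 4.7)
My plan is to reduce Dar and Jing's theorem to Vukman's theorem (the first theorem in the excerpt) in three steps: normalize, show $f = g$, and invoke Vukman.

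\emph{Step 1 (Normalization).} Put $q := f(1)$ and pass to $\widetilde{f}(x) := f(x) - xq$, $\widetilde{g}(x) := g(x) + xq$. A direct substitution shows that $(\widetilde{f}, \widetilde{g})$ still satisfies the same identity $\widetilde{f}(x) + x^2 \widetilde{g}(x^{-1}) = 0$. Evaluating the original identity at $x = 1$ gives $g(1) = -q$, so $\widetilde{f}(1) = \widetilde{g}(1) = 0$. It therefore suffices to prove $f \equiv g \equiv 0$ under the additional hypothesis $f(1) = g(1) = 0$; the stated general form is recovered by undoing the normalization.

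\emph{Step 2 (Show $f = g$).} For $b \in \mathcal{D}^{\times}$ with $1+b \in \mathcal{D}^{\times}$, I would compute $g\bigl(b(1+b)^{-1}\bigr)$ in two different ways. First, applying the functional equation at $x = 1+b$, together with the identity $(1+b)^{-1} + b(1+b)^{-1} = 1$ and $g(1) = 0$, gives $g\bigl(b(1+b)^{-1}\bigr) = (1+b)^{-2} f(b)$. Second, applying the functional equation at $x = b(1+b)^{-1}$, whose inverse is $1 + b^{-1}$, and using the derived relation $f(y^{-1}) = -y^{-2} g(y)$, the equality $f(1) = 0$, and the fact that $b$ commutes with $(1+b)^{-1}$, gives $g\bigl(b(1+b)^{-1}\bigr) = (1+b)^{-2} g(b)$. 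Equating these forces $f(b) = g(b)$ for every $b \in \mathcal{D}^{\times} \setminus \{-1\}$. Since $\mathcal{D}$ is not a field, Wedderburn's theorem makes $\mathcal{D}$ infinite, so every element of $\mathcal{D}$ can be written as a sum of two such $b$'s, and additivity extends $f = g$ to all of $\mathcal{D}$.

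\emph{Step 3 (Apply Vukman).} With $f = g$, the hypothesis reduces to Vukman's identity $f(x) + x^2 f(x^{-1}) = 0$ for all $x \in \mathcal{D}^{\times}$. Vukman's theorem then yields $f = 0$, and hence $g = 0$, completing the argument.

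\emph{Main obstacle.} The decisive ingredient is the choice of test element $b(1+b)^{-1}$ in Step 2. It is effective because its inverse $1 + b^{-1}$ couples the functional equation applied at $x = 1+b$ with the one applied at $x = b(1+b)^{-1}$ in a compatible way, and because all elements in sight lie in the commutative subring generated by $b$, which prevents commutator obstructions when rearranging powers and inverses. The noncommutativity of $\mathcal{D}$ enters only at the end -- essentially through Vukman's theorem and, more mildly, through Wedderburn in Step 2.
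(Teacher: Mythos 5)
Your argument is correct, but note that the paper does not actually prove this statement: it is quoted as background (Dar and Jing's Theorem 1.4) and used only for context, so there is no in-paper proof to compare against. Your reduction is a clean and self-contained alternative. The normalization $\widetilde f(x)=f(x)-xq$, $\widetilde g(x)=g(x)+xq$ with $q=f(1)$ does preserve the identity, and the two evaluations of $g\bigl(b(1+b)^{-1}\bigr)$ check out: writing $b(1+b)^{-1}=1-(1+b)^{-1}$ and using the equation at $x=1+b$ gives $(1+b)^{-2}f(b)$, while using $g(y)=-y^{2}f(y^{-1})$ with $y^{-1}=1+b^{-1}$ and $f(b^{-1})=-b^{-2}g(b)$ gives $b^{2}(1+b)^{-2}b^{-2}g(b)=(1+b)^{-2}g(b)$, all manipulations taking place in the commutative subring generated by $b$. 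Hence $f=g$ off $\{0,-1\}$, and the exceptional values are handled directly by $f(0)=g(0)=0$ and $f(-1)=-f(1)=0=-g(1)=g(-1)$, so the Wedderburn/sum-of-two-units step is not even needed. Your substitution $b\mapsto b(1+b)^{-1}$ is essentially a Hua-identity manipulation, so it is in the same spirit as the machinery the paper develops (Lemma 2.3 and Lemma 3.2) for its own Theorems 1.5 and 1.6; the difference is that you exploit the symmetry $f\leftrightarrow g$ obtained by replacing $x$ with $x^{-1}$ to collapse the problem onto Vukman's one-map theorem, whereas the paper's $n\geq 3$ arguments cannot do this and must instead grind out polynomial identities in $f(1)$ and $f(a)$. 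One small caveat: your Step 3 needs Vukman's theorem in its intended noncommutative form (for commutative fields the identity $f(x)+x^{2}f(x^{-1})=0$ is satisfied by any derivation, so the result is false there); this is exactly where the hypothesis that $\mathcal{D}$ is not a field enters, and you should say so explicitly rather than letting it hide inside the word ``skew field.''
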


\begin{theorem}\cite[Theorem 1.3]{Jing}\label{T1.3}
Let $\mathcal{D}$ be a division ring which is not a field with characteristic
different from $2$ and $3$, and $\mathcal{R}=M_n(\mathcal{D})$ with $n>1$. Let $f,g:\mathcal{R}\rightarrow \mathcal{R}$ be additive maps satisfying the identity
\[
f(X)+X^2g(X^{-1})=0
\]
for all $X\in \mathcal{R}^{\times}$. Then there exists an element $P\in \mathcal{R}$ such that $f(X)=XP$ and $g(X)=-XP$ for all $X\in \mathcal{D}$.
\end{theorem}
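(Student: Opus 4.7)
My plan is to prove the theorem in three stages: a normalization that absorbs the prospective constant $P$, a substitution argument on a family of invertible matrices that forces the reduced maps to vanish on matrix-unit generators, and a final additive extension from those generators to all of $\mathcal{R}$. First, evaluating the identity at $X = I$ gives $f(I) + g(I) = 0$; setting $P := f(I)$, one has $g(I) = -P$. Defining $\tilde f(X) := f(X) - XP$ and $\tilde g(X) := g(X) + XP$, a direct computation shows
\[
\tilde f(X) + X^2\tilde g(X^{-1}) = \bigl(f(X) + X^2 g(X^{-1})\bigr) - XP + X^2 \cdot X^{-1}P = 0,
\]
with $\tilde f(I) = \tilde g(I) = 0$. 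Hence it will suffice to prove the theorem under the additional assumption $f(I) = g(I) = 0$ and conclude $f \equiv g \equiv 0$. Moreover, replacing $X$ by $X^{-1}$ in the identity and multiplying on the left by $X^2$ yields the dual relation $g(X) = -X^2 f(X^{-1})$, so $f$ and $g$ play symmetric roles and it will be enough to control $f$.

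For the main step I would exploit the rich supply of invertible matrices in $M_n(\mathcal{D})$ with $n \geq 2$. For distinct indices $i \ne j$ and any $a \in \mathcal{D}$, the unipotent matrix $X = I + aE_{ij}$ satisfies $X^{-1} = I - aE_{ij}$ and $X^2 = I + 2aE_{ij}$. Substituting this $X$, and separately $X = I - aE_{ij}$, into the normalized identity and using additivity produces two equations whose sum and difference (after dividing by $2$, permitted since char$(\mathcal{D}) \ne 2$) give $f(aE_{ij}) = g(aE_{ij})$ and $aE_{ij}\,g(aE_{ij}) = 0$. Further substitutions such as $X = I + aE_{ij} + bE_{ji}$, together with conjugations $X \mapsto UXU^{-1}$ by permutation matrices, should propagate the vanishing to every off-diagonal matrix unit, and analogous computations with $X = (1+a)E_{ii} + \sum_{k \ne i} E_{kk}$ (valid whenever $1+a$ is invertible in $\mathcal{D}$) should handle the diagonal generators. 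The hypothesis that $\mathcal{D}$ is not a field lets me choose test scalars $a$ and noncommuting pairs $(a,b)$ in $\mathcal{D}$ so that the resulting linear systems do not degenerate.

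Once $f$ vanishes on every $aE_{ij}$, additivity forces $f \equiv 0$ on all of $\mathcal{R}$; then $g \equiv 0$ by the symmetry established in the first stage, and unwinding the normalization gives $f(X) = XP$ and $g(X) = -XP$. The hardest part will be the middle stage: the quadratic $X^2$ combined with the noncommutativity of both $\mathcal{D}$ and $M_n(\mathcal{D})$ produces, under substitutions $X \to X + Y$, cross-terms of the form $(XY + YX)\,g(\,\cdot\,)$ that entangle evaluations of $g$ at different arguments. Separating them to extract pure identities for $f(aE_{ij})$ will require carefully chosen test matrices, and this is exactly where the characteristic restrictions (needing $2$ and $3$ to be invertible) enter, appearing as denominators in the linear system one must solve.
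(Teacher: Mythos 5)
This statement is quoted by the paper from Dar and Jing and is not proved in the paper, so there is no in-paper proof to compare against; judging your proposal on its own terms, the opening stage is correct (the normalization $\tilde f(X)=f(X)-XP$, $\tilde g(X)=g(X)+XP$ with $P=f(I)$ does preserve the identity and reduces the problem to showing $\tilde f=\tilde g=0$, and the dual relation $g(X)=-X^2f(X^{-1})$ is right), but the decisive middle stage has a genuine gap. From $X=I\pm aE_{ij}$ you only obtain $f(aE_{ij})=g(aE_{ij})$ and $aE_{ij}\,g(aE_{ij})=0$; the latter kills just one row of $g(aE_{ij})$ and is nowhere near $f(aE_{ij})=0$. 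To close this you need to vary a scalar parameter, e.g.\ substitute $X=\alpha I+aE_{ij}$ for several values of $\alpha$ and then disentangle the resulting polynomial relations in $\alpha$ (this is exactly the role played by Lemma~2.1 and Steps~1--8 in the paper's proof of its Theorem~4.1-type result); your sketch never sets up such a system, and the single pair of substitutions you compute cannot produce it.

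The diagonal generators are an even more serious omission. For $X=(1+a)E_{ii}+\sum_{k\neq i}E_{kk}$ the identity involves $g\bigl(((1+a)^{-1}-1)E_{ii}\bigr)=g\bigl(-a(1+a)^{-1}E_{ii}\bigr)$, a genuinely rational expression in $a$ that entangles $f$ at $aE_{ii}$ with $g$ at $-a(1+a)^{-1}E_{ii}$. Breaking this entanglement is the heart of the problem (it is where Hua-type identities, or a reduction of the scalar part $\alpha I$ to the division-ring case, are needed), and the phrase ``analogous computations should handle the diagonal generators'' does not supply any mechanism for it. Two smaller points: $I+aE_{ij}+bE_{ji}$ is invertible only when $1-ab\neq 0$, so that substitution needs a restriction; and you never indicate where the hypothesis that $\mathcal{D}$ is not a field, or the exclusion of characteristic $3$, actually enters, whereas both are essential in the known proof. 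As written, the proposal is a plausible opening plus an accurate diagnosis of where the difficulty lies, but not a proof.
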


Recently, Catalano and Merch$\acute{a}$n \cite{Cat1} generalized Theorem 1.2 and obtained the following result.

\begin{theorem}\cite[Theorem 1]{Cat}\label{C}\label{T1.4}
Let $\mathcal{D}$ be a division ring with char$(\mathcal{D})\neq 2,3$, and let $f,g:\mathcal{D}\rightarrow \mathcal{D}$ be additive maps satisfying the identity
\[
f(x)+x^ng(x^{-1})=0
\]
for all $x\in \mathcal{D}^{\times}$, where $n=3$ or $n=4$. Then $f=0=g$.
\end{theorem}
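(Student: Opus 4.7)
My plan is to obtain a second equation from the given identity $f(x)+x^{n}g(x^{-1})=0$ by a scaling substitution $x\mapsto kx$ with a nonzero central integer $k$ invertible in $\mathcal{D}$, and then to compare. Because $f,g$ are additive, hence $\mathbb{Z}$-linear, one has $f(kx)=kf(x)$ and $g(k^{-1}y)=k^{-1}g(y)$; since $k$ is central we also have $(kx)^{n}=k^{n}x^{n}$ and $(kx)^{-1}=k^{-1}x^{-1}$. Feeding $kx$ into the hypothesis therefore gives $kf(x)+k^{n-1}x^{n}g(x^{-1})=0$, and subtracting $k$ times the original identity produces
\[
k\bigl(k^{n-2}-1\bigr)x^{n}g(x^{-1})=0
\]
for every $x\in\mathcal{D}^{\times}$. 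Hence it is enough to exhibit a single integer $k$ for which the scalar $k(k^{n-2}-1)$ is invertible in $\mathcal{D}$.

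For $n=3$ I would take $k=-1$: the coefficient is $(-1)(-1-1)=2$, which is invertible because char$(\mathcal{D})\neq 2$. Since $x^{3}\in \mathcal{D}^{\times}$, this yields $g(x^{-1})=0$ for every $x\in \mathcal{D}^{\times}$, hence $g\equiv 0$ on $\mathcal{D}$, and then $f\equiv 0$ from the original identity. For $n=4$ the parity substitution collapses, since $k=-1$ gives coefficient $(-1)(1-1)=0$; instead I would take $k=2$, whose coefficient $2(4-1)=6$ is invertible because char$(\mathcal{D})\neq 2,3$, and the same conclusion $g=0=f$ follows.

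There is no essential obstacle here: the content is that central integer scalars interact with $x^{n}$ and $x^{-1}$ in a way compatible with the $\mathbb{Z}$-linearity of $f$ and $g$, so a single scaling substitution forces the term $x^{n}g(x^{-1})$ to be multiplied by a controllable integer. The hypothesis char$(\mathcal{D})\neq 2,3$ is exactly what is required to make the minimal choices $k=-1$ and $k=2$ produce an invertible coefficient in the cases $n=3$ and $n=4$ respectively. The same scaling strategy is a natural starting point for the matrix-ring generalization announced in the abstract, applied with $k\cdot I_{m}$ as the central scalar, although there one will additionally need to promote the conclusion from $\mathcal{R}^{\times}$ to all of $\mathcal{R}$ using the standard fact that every element of $M_{m}(\mathcal{D})$ is a sum of invertibles.
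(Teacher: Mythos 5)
Your proof is correct, and it takes a genuinely different --- and substantially more elementary --- route than the one in the paper. Each step is sound: for an integer $k$ with $k\cdot 1$ invertible in $\mathcal{D}$, additivity gives $f(kx)=kf(x)$ and $k\,g\bigl((k\cdot 1)^{-1}y\bigr)=g(y)$, while centrality of $k\cdot 1$ gives $(kx)^n=k^nx^n$ and $(kx)^{-1}=(k\cdot 1)^{-1}x^{-1}$; subtracting $k$ times the original identity from the substituted one yields $k\bigl(k^{n-2}-1\bigr)x^ng(x^{-1})=0$, and your choices $k=-1$ for $n=3$ (coefficient $2$) and $k=2$ for $n=4$ (coefficient $6$) are invertible when char$(\mathcal{D})\neq 2,3$, so $g$ vanishes on $\mathcal{D}^{\times}$, hence on all of $\mathcal{D}$, and then $f=0$ follows from the original identity. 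The paper cites this statement from Catalano--Merch\'an and proves its generalization (Theorem \ref{T1}) by a much heavier chain: Hua's identity (Lemma \ref{LL}) is used to express $f(b)$ in terms of $f(1)$ and $g(b)$, a finite-difference argument (Lemma \ref{L2.1}) forces $f(1)=0$, one then derives the quadratic relation $f(a^2)=p(a)f(a)$, and a second finite-difference lemma (Lemma \ref{L2.2}) kills $f$. What your scaling argument buys is brevity and transparency; its intrinsic limitation is that it degenerates exactly when $k^{n-2}\equiv 1$ for every available $k$ --- in particular at $n=2$, where nonzero solutions $f(x)=xq$, $g(x)=-xq$ genuinely exist (a reassuring consistency check), and in characteristics $p$ with $(p-1)\mid (n-2)$. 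For $n=3,4$ with char$(\mathcal{D})\neq 2,3$, and in fact under the paper's hypothesis char$(\mathcal{D})>n$ or $0$ (where the prime field always contains some $k$ with $k^{n-2}\neq 1$, since $0<n-2<p-1$), the scaling suffices, so your method would also shortcut Theorem \ref{T1}. Your closing remark about the matrix case is likewise on target: the same substitution gives $g=0$ on $\mathcal{R}^{\times}$, and one finishes by writing an arbitrary matrix as a sum of invertible ones.
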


In the present paper we shall generalize both Theorem \ref{T1.3} and Theorem \ref{T1.4}. More precisely, we shall prove the following two results.

\begin{theorem}\label{T1}
Let $n\geq 3$ be an integer. Let $\mathcal{D}$ be a division ring  with char$(\mathcal{D})>n$ or char$(\mathcal{D})=0$. Let $f,g:\mathcal{D}\rightarrow \mathcal{D}$ be additive maps satisfying the identity
\begin{equation}\label{e1}
f(x)+x^ng(x^{-1})=0
\end{equation}
for all $x\in \mathcal{D}^{\times}$. Then $f=0=g$.
\end{theorem}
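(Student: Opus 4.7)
My plan is to apply the scaling substitution $x\mapsto (1+j)x$ to identity~\eqref{e1} for a carefully chosen integer $j$, and to extract from the comparison a central scalar that annihilates $f(x)$.

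First, I would record the following easy consequence of additivity. Since $f$ and $g$ are additive, both are $\mathbb{Z}$-linear, and whenever a nonzero integer $m$ is invertible in $\mathcal{D}$ one also has $f(m^{-1}z)=m^{-1}f(z)$, and likewise for $g$ (from $mf(m^{-1}z)=f(z)$). Moreover, every integer lies in the center of $\mathcal{D}$, so these scalars commute past all elements.

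Now fix $j\in\mathbb{Z}$ with $1+j\neq 0$ in $\mathcal{D}$. For any $x\in\mathcal{D}^\times$, the element $(1+j)x$ is invertible, so \eqref{e1} gives
\[
f((1+j)x) + ((1+j)x)^n g(((1+j)x)^{-1}) = 0.
\]
Using $\mathbb{Z}$-linearity of $f$, the centrality of $1+j$, and the identity $g((1+j)^{-1}x^{-1})=(1+j)^{-1}g(x^{-1})$ from the previous paragraph, this simplifies to
\[
(1+j)\,f(x) + (1+j)^{n-1}\, x^n g(x^{-1}) = 0.
\]
Substituting $x^n g(x^{-1})=-f(x)$ from \eqref{e1} yields
\[
(1+j)\bigl[(1+j)^{n-2}-1\bigr]\,f(x) = 0 \qquad \text{for every } x\in\mathcal{D}^\times.
\]

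The only remaining obstacle---and precisely the point at which the hypothesis on the characteristic intervenes---is to choose $j$ so that the scalar $(1+j)[(1+j)^{n-2}-1]$ is nonzero in $\mathcal{D}$. If $\operatorname{char}(\mathcal{D})=0$, the choice $j=1$ works because $2(2^{n-2}-1)\neq 0$ for every $n\geq 3$. If $\operatorname{char}(\mathcal{D})=p>n$, then $|\mathbb{F}_p^{\times}|=p-1\geq n>n-2$, so at most $n-2$ elements of $\mathbb{F}_p^{\times}$ satisfy $a^{n-2}=1$; pick $j\in\mathbb{Z}$ whose image $1+j\in\mathbb{F}_p$ lies outside this set. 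With such $j$, the previous display forces $f(x)=0$ for every invertible $x$, and since $f(0)=0$ by additivity, $f\equiv 0$ on $\mathcal{D}$. Then \eqref{e1} yields $x^n g(x^{-1})=0$ for every $x\in\mathcal{D}^\times$, so $g\equiv 0$ as well. I note that the argument collapses exactly at $n=2$, where $(1+j)^{n-2}-1\equiv 0$ regardless of $j$; this matches the fact that $n=2$ is the genuinely distinct theorem of Vukman, which must be treated by a different technique.
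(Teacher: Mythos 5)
Your proposal is correct, and it takes a genuinely different and much more elementary route than the paper. The paper proves Theorem \ref{T1} by first adapting Catalano--Merch\'an's Hua-identity computation (Lemma \ref{L2.4}) to get $f(1)=0$ and the relation $f(a^2)=\bigl(-\sum_{k=1}^n\binom{n}{k}(-1)^ka^k+a^n\bigr)f(a)$, then evaluating $f((x+1)^2)$ in two ways to produce a polynomial identity $p(x)f(x)=0$, and finally killing $f$ with the finite-difference Lemma \ref{L2.2}. You instead exploit the fact that for $n\geq 3$ the identity \eqref{e1} is inhomogeneous under central integer scaling: replacing $x$ by $mx$ with $m=1+j$ a central integer and using $\mathbb{Z}$-linearity (plus $g(m^{-1}z)=m^{-1}g(z)$, valid since $m$ is invertible in $\mathcal{D}$) yields $m f(x)+m^{n-1}x^ng(x^{-1})=0$, and comparison with $x^ng(x^{-1})=-f(x)$ gives $m(m^{n-2}-1)f(x)=0$. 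Your choice of $m$ is sound: in characteristic $0$ take $m=2$, and in characteristic $p>n$ the equation $a^{n-2}=1$ has at most $n-2<p-1$ solutions in $\mathbb{F}_p^{\times}$, so a suitable nonzero $m$ exists; then $f\equiv 0$ and hence $g\equiv 0$ since $x^n$ is invertible. This is a complete proof under the stated hypotheses. What your approach buys is brevity (no Hua identity, no finite differences) and a transparent explanation of why $n=2$ is the genuinely hard case (the identity becomes scaling-homogeneous, $m-m^{n-1}=0$ for $m$ arbitrary only when $n=2$... more precisely the coefficient vanishes identically exactly at $n=2$); it also shows the characteristic hypothesis can be weakened, e.g.\ for $n=3$ char$(\mathcal{D})\neq 2$ already suffices. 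What the paper's longer route buys is a set of intermediate structural identities in the spirit of the earlier literature, though the matrix-ring Theorem \ref{T2} only invokes the statement of Theorem \ref{T1}, not its proof, so your argument could be substituted without loss.
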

\begin{theorem}\label{T2}
Let $n\geq 3$ be an integer. Let $\mathcal{D}$ be a division ring with char$(\mathcal{D})>n$ or char$(\mathcal{D})=0$. Let $\mathcal{R}=M_m(\mathcal{D})$, $m>1$, and let $f,g:\mathcal{R}\rightarrow \mathcal{R}$ be additive maps satisfying the identity
\[
f(X)+X^ng(X^{-1})=0
\]
for all $X\in \mathcal{R}^{\times}$. Then $f=0=g$.
\end{theorem}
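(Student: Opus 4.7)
The strategy is to extract information by substituting $X=I+kA$ for nilpotent $A\in\mathcal{R}$ and integer $k$. Since $A$ is nilpotent, $I+kA$ is invertible for every $k$; additivity of $f,g$ gives $f(I+kA)=f(I)+kf(A)$; and both $(I+kA)^n$ and $(I+kA)^{-1}$ expand as finite polynomials in $kA$. The identity therefore becomes a polynomial in $k$ with matrix coefficients, of bounded degree, and the hypothesis $\mathrm{char}(\mathcal{D})>n$ (or $\mathrm{char}(\mathcal{D})=0$) lets a Vandermonde argument over enough integer values of $k$ force each coefficient to vanish.

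Apply this first with $A^{2}=0$. Comparing the coefficients of $k^{0},k^{1},k^{2}$ yields
\[
f(I)+g(I)=0,\qquad f(A)=g(A)-nAg(I),\qquad Ag(A)=0.
\]
Replacing $X$ by $X^{-1}$ in the original identity produces the symmetric relation $g(X)+X^{n}f(X^{-1})=0$, so the same argument also yields $Af(A)=0$. Specializing to $A=E_{ij}$ with $i\neq j$, one obtains $E_{ij}f(E_{ij})=E_{ij}g(E_{ij})=0$, which forces the $j$th row of both $f(E_{ij})$ and $g(E_{ij})$ to vanish. The refinement $A=E_{ij}+\beta E_{ik}$ with $i,j,k$ distinct (still satisfying $A^{2}=0$) produces cross relations linking, for instance, the $j$th row of $g(\beta E_{ik})$ to the $k$th row of $g(E_{ij})$.

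The next step is to run the polynomial analysis with $A$ of higher nilpotency index, for example $A=E_{12}+E_{23}+\cdots+E_{s-1,s}$ for varying $s\leq m$, and to combine the resulting relations with additivity in the scalar coefficients. The aim is to deduce $g(I)=0$ (whence $f(I)=0$) and to show that $f$ and $g$ annihilate every off-diagonal matrix unit $\beta E_{ij}$, $\beta\in\mathcal{D}$. I expect this to be the main obstacle: the combinatorial bookkeeping of the polynomial relations produced by different nilpotents is delicate, and one must confirm that the resulting linear system leaves no residual additive correction (such as an inner derivation-type term on $\mathcal{D}$) on the off-diagonal part.

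Once the off-diagonal entries are under control, only the action on diagonal matrices remains. For $d\in\mathcal{D}^{\times}$, take $X=dE_{ii}+\sum_{j\neq i}E_{jj}$. The original identity, together with conjugation by permutation matrices (which swap the role of the distinguished index $i$) and the already-established vanishing on off-diagonal units, reduces on the $(i,i)$-slot to a one-variable functional identity of the form $\varphi(d)+d^{n}\psi(d^{-1})=0$ for additive maps $\varphi,\psi:\mathcal{D}\to\mathcal{D}$. Theorem~\ref{T1} then forces $\varphi=\psi=0$, so $f$ and $g$ vanish on diagonal matrices as well, and hence on all of $\mathcal{R}$.
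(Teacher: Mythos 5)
Your plan is not a proof: the step you yourself flag as ``the main obstacle'' --- deducing $g(I)=0$ and, more importantly, that $f$ and $g$ annihilate every $\beta E_{ij}$ with $\beta\in\mathcal{D}$ arbitrary --- is exactly where the real work lies, and the tools you propose cannot reach it. Substituting $X=I+kA$ with integer $k$ and running a Vandermonde argument only ever produces relations among the finitely many values $f(A),g(A),Ag(A),\ldots$ for a \emph{fixed} nilpotent $A$; it gives no handle on $g(\beta E_{ij})$ as $\beta$ ranges over the (noncommutative) division ring. The paper closes this gap by letting the scalar itself range over $\mathcal{D}$: it substitutes $X=\alpha I+\beta E_{ij}$ and $X=\alpha I\pm E_{ij}$ with $\alpha\in\mathcal{D}^{\times}$, exploits the exact inverse formula $(\alpha I+E_{ij})^{-1}=\alpha^{-1}I-\alpha^{-2}E_{ij}$, first kills $f(E_{ij})$ over the centre via a Hua-type identity $(X^{-1}-(X+I)^{-1})^{-1}=X(X+I)$ (which yields $f(X^2)=(\sum_{k=1}^{n-1}\binom{n}{k}X^k)f(X)$), and only then bootstraps from $g(\beta^2E_{ij})=0$ to $g(\beta E_{ij})=0$ by the polarization $\beta\mapsto\beta+1$. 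It also needs Theorem \ref{T1} right at the start, applied entrywise to the scalar matrices $\alpha I$, to get $f(\alpha I)=0=g(\alpha I)$, which is then used to kill the diagonal blocks $f(\alpha E_{ii})$ by a summation trick. None of these ideas is present in your outline, and ``combinatorial bookkeeping with higher-index nilpotents'' will not substitute for them.

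Your concluding step also has a concrete flaw. For $X=dE_{ii}+\sum_{j\neq i}E_{jj}$ one has $X^{n}=d^{n}E_{ii}+\sum_{j\neq i}E_{jj}$, so after left-multiplying $g(X^{-1})$ by $X^{n}$ only the $i$-th \emph{row} of the resulting matrix identity carries the factor $d^{n}$; the rows $k\neq i$ yield identities of the shape $\varphi(d)+\psi(d^{-1})=0$, which is not of the form $\varphi(d)+d^{n}\psi(d^{-1})=0$ and is not covered by Theorem \ref{T1}. So even granting the off-diagonal part, your reduction of the diagonal action to Theorem \ref{T1} only controls one slot of $f(dE_{ii})$. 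The parts of your plan that are correct (the symmetry $g(X)+X^{n}f(X^{-1})=0$, and the relation $Af(A)=Ag(A)=0$ for $A^{2}=0$) do coincide with the paper's preliminary reductions and its Step 3, but the core of the argument is missing.
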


We organize this paper as follows: In Section $2$ we shall give some technical results, which will be used in the proof of our main result. In Section $3$ we shall give the proof of Theorem \ref{T1}. In Section $4$ we shall give the proof of Theorem \ref{T2}.

\section{Some technical results}

By $\mathcal{Z}$ we denote the set of all integers. Let $s\geq 1$ be an integer. We set
\[
\mathcal{N}_s=\{1,2,\ldots,s\}.
\]

The following technical result will be used in the proofs of both Theorem \ref{T1} and Theorem \ref{T2}.

\begin{lemma}\label{L2.1}
Let $\mathcal{D}$ be a division ring with char$(\mathcal{D})>t$, $t\geq 1$ or char$(\mathcal{D})=0$. Let $\mathcal{R}=M_m(\mathcal{D})$, where $m>1$. Let $p(x)=a_0x^t+a_1x^{t-1}+\cdots +a_t$ be a polynomial over $\mathcal{Z}$ with char$(\mathcal{D})\nmid a_0$. Let $A\in M_m(\mathcal{D})$. Suppose that $p(\alpha)A=0$ for all $\alpha\in \mathcal{N}_{t+1}$. Then $A=0$. In particular, for $c\in \mathcal{D}$, $p(\alpha)c=0$ for all $\alpha\in \mathcal{N}_{t+1}$, then $c=0$.
\end{lemma}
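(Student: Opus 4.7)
The plan is to exploit that $p(\alpha)$ is a scalar from $\mathbb{Z}$, so the relation $p(\alpha)A=0$ asserts that a particular integer annihilates the matrix $A$. If we can exhibit even one $\alpha_0\in\mathcal{N}_{t+1}$ whose image in $\mathcal{D}$ satisfies $p(\alpha_0)\neq 0$, then $p(\alpha_0)$ is an invertible element of the division ring $\mathcal{D}$, and multiplying by its inverse turns $p(\alpha_0)A=0$ into $A=0$. So the entire content of the lemma reduces to the claim that at least one of $p(1),p(2),\ldots,p(t+1)$ is nonzero modulo char$(\mathcal{D})$.

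To establish that claim I would pass to the prime subfield $\mathbb{F}$ of $\mathcal{D}$ --- namely $\mathbb{F}_p$ when char$(\mathcal{D})=p>t$, and $\mathbb{Q}$ when char$(\mathcal{D})=0$. The two standing hypotheses then combine as follows. First, char$(\mathcal{D})\nmid a_0$ guarantees that $p(x)$ has degree exactly $t$ as an element of $\mathbb{F}[x]$. Second, char$(\mathcal{D})>t$ (or $=0$) guarantees that the images of $1,2,\ldots,t+1$ in $\mathbb{F}$ are pairwise distinct, since their pairwise differences are integers in $\{1,\ldots,t\}$, none of which is divisible by char$(\mathcal{D})$. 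The classical fact that a degree-$t$ polynomial over a field has at most $t$ roots in that field then forbids $p$ from vanishing at all $t+1$ of these distinct points, which produces the desired $\alpha_0$.

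With that $\alpha_0$ in hand, invertibility of $p(\alpha_0)$ in $\mathcal{D}$ yields $A=0$, proving the main statement; the scalar ``in particular'' assertion follows by the identical argument with a fixed $c\in\mathcal{D}$ in place of $A$, and is in fact a special case of the reasoning, since the argument never uses that $A$ is a matrix rather than a scalar. I do not expect a real obstacle here: the only subtle point is recognizing that the two hypotheses are precisely what is needed to reduce the problem to the polynomial-roots principle over the prime subfield, rather than going the heavier Vandermonde route of treating $p(1)A=\cdots=p(t+1)A=0$ as a linear system in the unknowns $a_0A,a_1A,\ldots,a_tA$ (which also works, via $\det V=\prod_{1\le i<j\le t+1}(j-i)$ being nonzero in $\mathcal{D}$, but is less direct).
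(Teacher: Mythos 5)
Your proof is correct, but it takes a genuinely different route from the paper's. The paper argues by iterated finite differences: setting $p_1(x)=p(x+1)-p(x)$, one gets a polynomial of degree $t-1$ with leading coefficient $ta_0$ satisfying $p_1(\alpha)A=0$ for $\alpha\in\mathcal{N}_t$, and repeating this $t$ times collapses the hypothesis to $t!\,a_0A=0$, whence $A=0$ because char$(\mathcal{D})>t$ (or $=0$) and char$(\mathcal{D})\nmid a_0$. You instead reduce the coefficients modulo the characteristic, observe that $\bar p$ has degree exactly $t$ over the prime subfield while $1,\dots,t+1$ remain pairwise distinct there, and invoke the bound of at most $t$ roots for a degree-$t$ polynomial over a field to produce a single $\alpha_0$ with $p(\alpha_0)$ invertible in $\mathcal{D}$; one multiplication then kills $A$. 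Both arguments use the two hypotheses in exactly the same places (char$\nmid a_0$ to keep the degree, char$>t$ to keep enough distinct evaluation points, respectively to make $t!a_0$ invertible), and your observation that the matrix/scalar distinction is irrelevant matches the paper, which also treats the ``in particular'' clause as a trivial specialization. Your version is shorter and more conceptual; the paper's version is more self-contained arithmetic, avoiding any appeal to the factor theorem, and its difference-operator mechanism is reused almost verbatim in Lemma 2.2, where the coefficients live in the noncommutative ring $\mathcal{D}$ itself and the root-counting argument would no longer apply --- which is presumably why the authors chose it.
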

\begin{proof}
By our hypotheses we get
\[
(p(\alpha+1)-p(\alpha))A=0
\]
for all $\alpha\in \mathcal{N}_t$. We set
\[
p_1(x)=p(x+1)-p(x).
\]
It is easy to check that
\[
p_1(x)=p(x+1)-p(x)=ta_0x^{t-1}+b_1x^{t-2}+\cdots +b_{t-1}
\]
for some $b_1,\ldots,b_{t-1}\in \mathcal{Z}$. We have that
\[
p_1(\alpha)A=0
\]
for all $\alpha\in \mathcal{N}_{t}$. It is clear that
\[
(p_1(\alpha+1)-p_1(\alpha))A=0
\]
for all $\alpha\in \mathcal{N}_{t-1}$. We set
\[
p_2(x)=p_1(x+1)-p_1(x).
\]
It is easy to check that
\[
p_2(x)=t(t-1)a_0x^{t-2}+d_1x^{t-3}+\cdots +d_{t-2}
\]
for some $d_1,\ldots,d_{t-2}\in \mathcal{Z}$. This implies
\[
p_2(\alpha)A=0
\]
for all $\alpha\in \mathbb{N}_{t-1}$. Continuing the same arguments as above we obtain
\[
t!a_0A=0.
\]
Since char$(\mathcal{D})>t$ or char$(\mathcal{D})=0$, and char$(\mathcal{D})\nmid a_0$, we get $A=0$.
\end{proof}

The following technical result will be used in the proof of Theorem \ref{T1}.

\begin{lemma}\label{L2.2}
Let $\mathcal{D}$ be a division ring with char$(\mathcal{D})>t$, $t\geq 1$ or char$(\mathcal{D})=0$. Let $p(x)=a_0x^t+a_1x^{t-1}+\cdots +a_t$ be a polynomial over $\mathcal{D}$ with $a_0\in \mathcal{D}^{\times}$. Let $f:\mathcal{D}\rightarrow \mathcal{D}$ be additive map such that $f(1)=0$. Suppose that $p(a)f(a)=0$ for all $a\in \mathcal{D}$. Then $f=0$.
\end{lemma}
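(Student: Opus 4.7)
The strategy is to convert the single identity $p(a)f(a)=0$ into a one-parameter family indexed by an integer $k$, and then run the finite-difference machinery of Lemma~\ref{L2.1} inside the polynomial ring $\mathcal{D}[k]$ rather than $\mathcal{Z}[x]$.

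Starting point: because $f$ is additive with $f(1)=0$, one has $f(a+k)=f(a)+kf(1)=f(a)$ for every $a\in\mathcal{D}$ and every integer $k\ge 1$. Substituting $a+k$ in place of $a$ in the standing hypothesis $p(\,\cdot\,)f(\,\cdot\,)=0$ therefore yields
\[
p(a+k)\,f(a)=0
\]
for all $a\in\mathcal{D}$ and all $k\in\mathcal{N}_{t+1}$. Since integers are central in $\mathcal{D}$, the binomial theorem applies to each $(a+k)^{t-i}$; rearranging by powers of $k$ gives
\[
p(a+k)=a_0k^t+c_1(a)k^{t-1}+\cdots+c_t(a),
\]
with $c_1(a),\ldots,c_t(a)\in\mathcal{D}$ depending on $a$ but not on $k$. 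Thus for fixed $a$ the map $k\mapsto p(a+k)f(a)$ is a polynomial in $k$ of degree at most $t$ with coefficients in $\mathcal{D}$, leading coefficient $a_0f(a)$, and zeros at $k=1,\ldots,t+1$.

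Second step: apply the finite difference operator $q(x)\mapsto q(x+1)-q(x)$ successively, exactly as in the proof of Lemma~\ref{L2.1}. Each application lowers the degree by one and multiplies the leading coefficient by its current degree, and nothing in that argument required integer coefficients beyond having the leading coefficient survive in $\mathcal{D}$. After $t$ iterations one arrives at the constant identity
\[
t!\,a_0\,f(a)=0.
\]
Since char$(\mathcal{D})>t$ or char$(\mathcal{D})=0$, the element $t!$ is invertible in $\mathcal{D}$, and by hypothesis $a_0\in\mathcal{D}^{\times}$; hence $f(a)=0$. As $a$ was arbitrary, $f=0$.

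I do not foresee any substantive obstacle. The only care needed is the routine verification that the finite-difference cascade of Lemma~\ref{L2.1}, originally formulated for polynomials over $\mathcal{Z}$ acting on an element of $M_m(\mathcal{D})$, carries over verbatim to polynomials in $\mathcal{D}[k]$ acting on $f(a)\in\mathcal{D}$; the lower-order coefficients $c_j(a)$ arising from the binomial expansion play no role except as distractions that are annihilated by the iterated differences.
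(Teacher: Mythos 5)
Your proposal is correct and is essentially the paper's own argument: the paper likewise exploits $f(a+1)=f(a)$ to get $p(a+1)f(a)=0$ and then iterates the unit-shift finite difference $p\mapsto p(x+1)-p(x)$ until only $t!\,a_0 f(a)=0$ remains. Your packaging of the same computation as a degree-$t$ polynomial in the central integer parameter $k$ vanishing at $k=1,\dots,t+1$ is just a change of bookkeeping, and your closing remark correctly identifies the only point needing care (coefficients in $\mathcal{D}$ rather than $\mathcal{Z}$, which is harmless since the argument ends with the invertible elements $t!$ and $a_0$).
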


\begin{proof}
By our hypotheses we get
\begin{equation}\label{ee1}
p(a)f(a)=0
\end{equation}
and
\[
(p(a+1)f(a+1)=0
\]
for all $a\in \mathcal{D}$. Since $f(1)=0$ we get from the last relation that
\begin{equation}\label{ee2}
p(a+1)f(a)=0
\end{equation}
for all $a\in \mathcal{D}$. Taking the difference of (\ref{ee2}) and (\ref{ee1}), we get
\[
(p(a+1)-p(a))f(a)=0
\]
for all $a\in \mathcal{D}$. We set
\[
p_1(x)=p(x+1)-p(x).
\]
It is easy to check that
\[
p_1(x)=ta_0x^{t-1}+b_1x^{t-2}+\cdots +b_{t-1}
\]
for some $b_1,\ldots,b_{t-1}\in \mathcal{D}$. We have
\[
p_1(a)f(a)=0
\]
for all $a\in \mathcal{D}$. We have
\[
(p_1(a+1)-p_1(a))f(a)=0
\]
for all $a\in \mathcal{D}$. We set
\[
p_2(x)=p_1(x+1)-p_1(x).
\]
It is easy to check that
\[
p_2(x)=t(t-1)a_0x^{t-2}+d_1x^{t-3}+\cdots +d_{t-2},
\]
where $d_1,\ldots,d_{t-2}\in \mathcal{D}$. We have
\[
p_2(a)f(a)=0
\]
for all $a\in \mathcal{D}$. Continuing the same arguments as above we obtain
\[
t!a_0f(a)=0
\]
for all $a\in \mathcal{D}$. Since char$(\mathcal{D})>t$ or char$(\mathcal{D})=0$, and $a_0\neq 0$, we get $f(a)=0$ for all $a\in \mathcal{D}$.
\end{proof}

The following Hua's identity will be used in the proof of Theorem \ref{T1}.

\begin{lemma}\cite[Lemma 3]{Cat1}\label{LL}
Let $\mathcal{D}$ be a division ring. For any $a,b\in \mathcal{D}$ such that $ab\neq 0,1$:
\begin{equation}\label{H1}
a-aba=(a^{-1}+(b^{-1}-a)^{-1})^{-1}.
\end{equation}
If we alternate choosing $a=1$ and $b=1$ in (\ref{H1}), then we have the following identities:
\begin{equation}\label{H2}
a-a^2=(a^{-1}+(1-a)^{-1})^{-1}\quad\mbox{and}\quad 1-b=(1+(b^{-1}-1)^{-1})^{-1}.
\end{equation}
for $a,b\not\in\{0,1\}$.
\end{lemma}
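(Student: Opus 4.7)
The plan is to verify Hua's identity (\ref{H1}) by a short direct computation manipulating the right-hand side into $a-aba$, and then obtain (\ref{H2}) by substituting $b=1$ and $a=1$ respectively into (\ref{H1}).

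First I would confirm that every inverse appearing in (\ref{H1}) makes sense. Because $\mathcal{D}$ is a division ring, $ab\neq 0$ forces $a,b\in\mathcal{D}^{\times}$, so $a^{-1}$ and $b^{-1}$ exist. The hypothesis $ab\neq 1$ gives $1-ab\neq 0$; and since in a division ring $ab=1$ is equivalent to $ba=1$, we also have $1-ba\neq 0$. Finally, the factorization $b^{-1}-a = b^{-1}(1-ba)$ exhibits $b^{-1}-a$ as a product of invertible elements, hence it is invertible.

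The core step is the following rearrangement. From $b^{-1}-a=b^{-1}(1-ba)$ I would invert to get $(b^{-1}-a)^{-1}=(1-ba)^{-1}b$, and then compute
\[
a^{-1}+(b^{-1}-a)^{-1} = a^{-1}+(1-ba)^{-1}b = (1-ba)^{-1}\bigl[(1-ba)a^{-1}+b\bigr] = (1-ba)^{-1}\bigl[a^{-1}-b+b\bigr] = (1-ba)^{-1}a^{-1}.
\]
Taking inverses on both sides yields $\bigl(a^{-1}+(b^{-1}-a)^{-1}\bigr)^{-1}=a(1-ba)=a-aba$, which is (\ref{H1}).

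For (\ref{H2}), I would simply specialize. Setting $b=1$ in (\ref{H1}) (the hypothesis $ab\neq 0,1$ becomes $a\notin\{0,1\}$) yields $a-a^2=(a^{-1}+(1-a)^{-1})^{-1}$. Setting $a=1$ (so $b\notin\{0,1\}$) yields $1-b=(1+(b^{-1}-1)^{-1})^{-1}$. I do not anticipate a real obstacle; the only mildly subtle point is checking that $b^{-1}-a$ is invertible, and this is handled at the very start by the factorization $b^{-1}(1-ba)$.
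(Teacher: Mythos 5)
Your proof is correct and complete. Note that the paper itself offers no proof of this lemma: it is quoted verbatim from Catalano and Merch\'{a}n \cite[Lemma 3]{Cat1}, so there is no argument in the text to compare yours against. Your verification is the standard one: the factorization $b^{-1}-a=b^{-1}(1-ba)$ justifies all the inverses (using that $ab\neq 0$ forces $a,b\in\mathcal{D}^{\times}$ and that $ab\neq 1$ is equivalent to $ba\neq 1$ once $a$ is invertible), and the telescoping computation
\[
a^{-1}+(1-ba)^{-1}b=(1-ba)^{-1}\bigl[(1-ba)a^{-1}+b\bigr]=(1-ba)^{-1}a^{-1}
\]
inverts to $a(1-ba)=a-aba$, exactly as required; the two identities in (\ref{H2}) are then immediate specializations. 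This gives the paper a self-contained justification of a step it only cites.
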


\section{The proof of Theorem \ref{T1}}

Catalano and Merch$\acute{a}$n \cite{Cat1} gave the following crucial result, by using Lemma \ref{LL}.

\begin{lemma}\cite[Lemma 4]{Cat1}\label{L2.3}
Let $n\geq 2$ and let $\mathcal{D}$ be a division ring. If $f,g:\mathcal{D}\rightarrow \mathcal{D}$ are maps satisfying (\ref{e1}), then
\begin{enumerate}
\item[(a)] $f(b)=\left(-\sum\limits_{k=1}^n\binom{n}{k}(-b)^k+b^n\right)f(1)+g(b)$\quad\mbox{for $b\in \mathcal{D}$}.\\
Additionally, if $n\geq 3$ and char$(\mathcal{D})$ does not divide the sum $\sum\limits_{\substack{j=2\\j\ even}}^{n-1}\binom{n}{j}2^{j+1}$, then
\item[(b)] $f(1)=0$;
\item[(c)] $f(a^2)=\left(-\sum\limits_{k=1}^n\binom{n}{k}(-1)^ka^k+a^n\right)f(a)$ for all $a\in \mathcal{D}$.
\end{enumerate}
\end{lemma}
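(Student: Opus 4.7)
My plan is to establish the three parts in order, using Hua's identities from Lemma~\ref{LL} as the main tool, together with the central observation that $b$ commutes with $1-b$ in any division ring (hence all of $b^n$, $b^{-n}$, $(1-b)^n$, $(1-b)^{-n}$ commute pairwise).

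For part (a), I apply (\ref{e1}) at $x = 1-b$ for $b \notin \{0,1\}$ to obtain $f(1) - f(b) + (1-b)^n g((1-b)^{-1}) = 0$. Using the second identity in (\ref{H2}) to write $(1-b)^{-1} = 1 + (b^{-1}-1)^{-1}$, together with additivity of $g$ and $g(1) = -f(1)$, I rewrite $g((1-b)^{-1})$ in terms of $g((b^{-1}-1)^{-1})$. A second application of (\ref{e1}) at $y = b^{-1}-1$---combined with the factorisation $b^{-1} - 1 = b^{-1}(1-b)$, which gives $(b^{-1}-1)^{-n} = (1-b)^{-n} b^n$---expresses $g((b^{-1}-1)^{-1})$ through $f(b^{-1})$ and $f(1)$. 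Finally, the relation $b^n f(b^{-1}) = -g(b)$, obtained from (\ref{e1}) at $b^{-1}$, lets one collect everything into $f(b) = (1 + b^n - (1-b)^n)f(1) + g(b)$. Since $1 + b^n - (1-b)^n = -\sum_{k=1}^n \binom{n}{k}(-b)^k + b^n$, this is (a); the cases $b \in \{0,1\}$ follow at once from $f(0) = g(0) = 0$ and $g(1) = -f(1)$.

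For part (b), apply (a) at $b$ and at $-b$ and add; additivity of $f$ and $g$ eliminates $f(b)$ and $g(b)$, leaving
\[
\bigl(2 + (1 + (-1)^n) b^n - (1-b)^n - (1+b)^n\bigr) f(1) = 0
\]
for every $b \in \mathcal{D}$. Using the binomial identity $(1-b)^n + (1+b)^n = 2\sum_{k \text{ even}} \binom{n}{k} b^k$ and specialising to $b = 2$, a short calculation shows that the coefficient of $f(1)$ reduces, regardless of the parity of $n$, to $-\sum_{j \text{ even},\ 2 \leq j \leq n-1}\binom{n}{j} 2^{j+1}$. The hypothesis on $\mathrm{char}(\mathcal{D})$ makes this integer nonzero in $\mathcal{D}$, whence $f(1) = 0$.

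For part (c), I apply (\ref{e1}) at $x = b - b^2 = b(1-b)$ for $b \notin \{0,1\}$. The first identity in (\ref{H2}) gives $(b-b^2)^{-1} = b^{-1} + (1-b)^{-1}$, and commutativity yields $(b-b^2)^n = b^n(1-b)^n$. Splitting $g(b^{-1} + (1-b)^{-1})$ by additivity and applying (\ref{e1}) at both $b^{-1}$ and $(1-b)^{-1}$ (using $f(1-b) = f(1) - f(b)$) produces
\[
f(b^2) = (1 + b^n - (1-b)^n)f(b) - b^n f(1),
\]
which, once $f(1) = 0$ from part (b) is invoked, is precisely (c); the cases $b \in \{0,1\}$ are immediate. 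The main obstacle throughout is purely non-commutative bookkeeping: every simplification above relies on tracking carefully which elements commute with which, with the identities $(b^{-1}-1)^{-n} = (1-b)^{-n} b^n$ and $(b-b^2)^n = b^n(1-b)^n$ serving as the key pieces of leverage that make Hua's substitutions usable.
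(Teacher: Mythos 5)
Your proof is correct, and for parts (a) and (c) it follows essentially the same route the paper takes (the paper proves (a) verbatim in its modified Lemma~\ref{L2.4}, and for (c) it simply defers to \cite{Cat1}; your computation via the first Hua identity $(b-b^2)^{-1}=b^{-1}+(1-b)^{-1}$, the commutation of $b$ with $1-b$, and the substitutions $g(b^{-1})=-b^{-n}f(b)$, $g((1-b)^{-1})=-(1-b)^{-n}(f(1)-f(b))$ is exactly the intended argument and lands on the same identity $f(b^2)=(1+b^n-(1-b)^n)f(b)-b^nf(1)$). Where you genuinely diverge is part (b): the paper computes $f(x+1)$ in two ways -- once from (a) applied to $x+1$ and once from additivity plus (a) applied to $x$ -- and then carries out a fairly heavy double-sum binomial simplification, using $\binom{n}{j}(1+r)^{n-j}=\sum_{k=j}^{n}\binom{n}{k}\binom{k}{j}r^{k-j}$, to reduce the resulting identity to $2\sum_{j\ \mathrm{even},\,2\le j\le n-1}\binom{n}{j}x^{j}f(1)=0$. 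Your observation that adding (a) at $b$ and at $-b$ kills $f(b)+f(-b)$ and $g(b)+g(-b)$ outright reaches the same polynomial identity $-2\sum_{j\ \mathrm{even},\,2\le j\le n-1}\binom{n}{j}b^{j}\,f(1)=0$ in one line, since $(1-b)^n+(1+b)^n$ collapses to twice the even part of the binomial expansion; specialising to $b=2$ then matches the stated divisibility hypothesis exactly. Your route buys a visibly shorter and less error-prone computation; the paper's $x+1$ comparison buys nothing extra here, although the analogous ``shift by $1$'' computation is what the authors reuse later (in Lemma~\ref{L2.4} they need the identity at all $\alpha\in\mathcal{N}_n$ to feed into Lemma~\ref{L2.1}, and your polynomial identity would serve that purpose equally well). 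The only point worth flagging is cosmetic: the lemma nominally says ``maps,'' but both your argument and the paper's use additivity of $f$ and $g$ throughout (e.g.\ $f(b)+f(-b)=0$ and $f(b^{-1}-1)=f(b^{-1})-f(1)$), which is the intended reading.
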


The following result is a modification of Lemma \ref{L2.3}, which can be proved by using both Lemma \ref{L2.1} and the same arguments as that of Lemma \ref{L2.3}.

\begin{lemma}\label{L2.4}
Let $n\geq 2$ and let $\mathcal{D}$ be a division ring. If $f,g:\mathcal{D}\rightarrow \mathcal{D}$ are maps satisfying (\ref{e1}), then
\begin{enumerate}
\item[(a)] $f(b)=\left(-\sum\limits_{k=1}^n\binom{n}{k}(-b)^k+b^n\right)f(1)+g(b)$\quad\mbox{for $b\in \mathcal{D}$}.\\
Additionally, if $n\geq 3$ and char$(\mathcal{D})>n$ or char$(\mathcal{D})=0$, then
\item[(b)] $f(1)=0$;
\item[(c)] $f(a^2)=\left(-\sum\limits_{k=1}^n\binom{n}{k}(-1)^ka^k+a^n\right)f(a)$ for all $a\in \mathcal{D}$.
\end{enumerate}
\end{lemma}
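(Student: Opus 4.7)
The plan is to follow Catalano and Merch\'an's proof of Lemma \ref{L2.3} step by step and only upgrade the single arithmetic step in which their divisibility hypothesis on char$(\mathcal{D})$ is invoked. Part (a) depends only on Hua's identity (Lemma \ref{LL}) and the functional identity (\ref{e1}); it never uses the characteristic of $\mathcal{D}$. I would therefore simply reproduce their derivation verbatim to obtain (a).

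The main new ingredient enters in (b). Setting $x=1$ in (\ref{e1}) gives $f(1)+g(1)=0$, hence $g(1)=-f(1)$, and since $f,g$ are additive, $f(\alpha)=\alpha f(1)$ and $g(\alpha)=-\alpha f(1)$ for every positive integer $\alpha$. Substituting $b=\alpha\cdot 1_{\mathcal{D}}$ into the formula of (a) and using that $\alpha\cdot 1_{\mathcal{D}}$ is central yields
\[
p(\alpha)f(1)=0,\qquad p(x):=1-(1-x)^{n}+x^{n}-2x\in\mathcal{Z}[x],
\]
for every positive integer $\alpha$. A direct expansion shows that $p$ has degree $n$ with leading coefficient $2$ when $n$ is odd, and degree $n-1$ with leading coefficient $n$ when $n$ is even. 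In either case the leading coefficient is nonzero in $\mathcal{D}$, because the hypothesis char$(\mathcal{D})>n\geq 3$ or char$(\mathcal{D})=0$ guarantees that no integer between $1$ and $n$ vanishes in $\mathcal{D}$. Applying Lemma \ref{L2.1} with $t=n$ (if $n$ is odd) or $t=n-1$ (if $n$ is even), and $\alpha$ ranging over $\mathcal{N}_{t+1}$, then forces $f(1)=0$.

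With $f(1)=0$ in hand, the formula in (a) collapses to $f=g$, so (\ref{e1}) becomes $f(a^{-1})=-a^{-n}f(a)$ for every $a\in\mathcal{D}^{\times}$. To deduce (c), I would apply Hua's identity in the form $(a-a^{2})^{-1}=a^{-1}+(1-a)^{-1}$ (which holds for $a\notin\{0,1\}$), take $f$ of both sides, and combine additivity with $f(a^{-1})=-a^{-n}f(a)$ and $f((1-a)^{-1})=(1-a)^{-n}f(a)$. Independently, (\ref{e1}) applied at $x=(a-a^{2})^{-1}$ expresses $f((a-a^{2})^{-1})$ as a scalar multiple of $f(a-a^{2})=f(a)-f(a^{2})$; here I would use that $a$ and $1-a$ commute, so $(a-a^{2})^{n}=a^{n}(1-a)^{n}$. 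Equating the two expressions and clearing the invertible left factor $a^{n}(1-a)^{n}$ rearranges to the desired formula for $f(a^{2})$; the boundary values $a=0$ and $a=1$ are handled separately using $f(0)=f(1)=0$.

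I do not foresee a deep obstacle: the only point to watch is the case split in (b), as the leading term of $p(x)$ depends on the parity of $n$, which forces the application of Lemma \ref{L2.1} to be split into two parallel cases. Otherwise the argument is a faithful transcription of the proof of Lemma \ref{L2.3} with Lemma \ref{L2.1} replacing the single substitution step used in \cite{Cat1}.
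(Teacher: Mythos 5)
Your proposal is correct, and its parts (a) and (c) track the paper's treatment: the paper reproduces the Catalano--Merch\'an derivation for (a) and simply defers (c) to ``the same arguments as Lemma \ref{L2.3}(c)'', which is exactly the Hua-identity computation with $(a-a^2)^{-1}=a^{-1}+(1-a)^{-1}$ that you sketch. Part (b), however, takes a genuinely different route. The paper computes $f(x+1)$ in two ways for an \emph{arbitrary} $x\in\mathcal{D}$ --- once from formula (a) applied to $x+1$, once from additivity plus formula (a) applied to $x$ --- and then performs a term-by-term binomial cancellation in equation (\ref{H4}) to reduce the identity to $2\sum_{j\ \mathrm{even}}\binom{n}{j}x^jf(1)=0$, which it finally specializes at $x=\alpha\in\mathcal{N}_n$ in order to invoke Lemma \ref{L2.1}. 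You instead specialize formula (a) directly at the integer points $b=\alpha\cdot 1_{\mathcal{D}}$, using $f(\alpha)=\alpha f(1)$ and $g(\alpha)=-\alpha f(1)$ (from additivity and $g(1)=-f(1)$, the latter being (\ref{e1}) at $x=1$), which immediately yields $\bigl(1-(1-\alpha)^{n}+\alpha^{n}-2\alpha\bigr)f(1)=0$ for every positive integer $\alpha$. The two routes produce different polynomials --- yours has degree $n$ with leading coefficient $2$ for odd $n$ and degree $n-1$ with leading coefficient $n$ for even $n$, while the paper's $2\sum_{j\ \mathrm{even}}\binom{n}{j}x^j$ has degree $n-1$ or $n-2$ --- but in both cases the leading coefficient is prime to char$(\mathcal{D})$ and the degree is below char$(\mathcal{D})$, so Lemma \ref{L2.1} applies and forces $f(1)=0$. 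Your version buys a noticeably shorter computation, avoiding the double binomial sum and its cancellation entirely, at the cost only of the parity case split you already flag; the paper's version has the (here unused) advantage of producing an identity valid for all $x\in\mathcal{D}$ rather than only at integral multiples of the identity.
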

\begin{proof}
We start by proving (a). Firstly, we notice that the expression in (a) trivially holds
for $b=0,1$. Therefore we may assume that $b\neq 0,1$. Using (\ref{H2}) and the additivity of $f$ and $g$, we have that
\begin{eqnarray}\label{H3}
\begin{split}
f(b)&=f(1)-f(1+(b^{-1}-1)^{-1})^{-1})\\
&=f(1)+(1+(b^{-1}-1)^{-1})^{-n}g(1+(b^{-1}-1)^{-1})\\
&=f(1)+(1-b)^{n})g(1)+b^n(b^{-1}-1)^ng((b^{-1}-1)^{-1})\\
&=f(1)-(1-b)^nf(1)-b^n(b^{-1}-1)^n(b^{-1}-1)^{-n}f(b^{-1}-1)\\
&=f(1)-(1-b)^nf(1)-b^nf(b^{-1})+b^nf(1)\\
&=f(1)-(1-b)^nf(1)+b^nb^{-n}g(b)+b^nf(1)\\
&=f(1)-\sum\limits_{k=0}^n\binom{n}{k}(-b)^kf(1)+g(b)+b^nf(1)\\
&=\left(-\sum\limits_{k=1}^n\binom{n}{k}(-b)^k+b^n\right)f(1)+g(b).
\end{split}
\end{eqnarray}
We proceed to prove (b). Let $x\in \mathcal{D}$. We want to study $f(x+1)$. First,
using equation (\ref{H3}), we have
\begin{eqnarray*}
\begin{split}
f(x+1)&=\left(-\sum\limits_{k=1}^n\binom{n}{k}(x+1)^k+(x+1)^n\right)f(1)+g(x+1)\\
&=\left(-\sum\limits_{k=1}^n\binom{n}{k}(-1)^k\sum\limits_{j=0}^k\binom{k}{j}x^j+\sum\limits_{j=0}^n\binom{n}{j}x^j\right)f(1)+g(x)-f(1).
\end{split}
\end{eqnarray*}
Second, using the additivity of $f$ along with equation (\ref{H3}), we have
\begin{eqnarray*}
\begin{split}
f(x+1)&=f(x)+f(1)\\
&=\left(-\sum\limits_{k=1}^n\binom{n}{k}(-1)^kx^k+x^n\right)f(1)+g(x)+f(1).
\end{split}
\end{eqnarray*}
Therefore, simplifying slightly, we obtain
\begin{equation}\label{H4}
\left(-\sum\limits_{k=1}^n\sum\limits_{j=0}^k\binom{n}{k}\binom{k}{j}(-1)^kx^j+\sum\limits_{j=1}^n\binom{n}{j}(1+(-1)^j)x^j-x^n-1\right)f(1)=0.
\end{equation}

We proceed, in turn, to examine the terms associated to the different monomials $x^j$, with $j=0,\ldots,n$. We note that if $j=n$, then $k=n$ in the previous sums, and we have
\[
-(-1)^nx^n+(1+(-1)^n)x^n-x^n=0,
\]
and so all $x^n$ terms cancel from equation (\ref{H4}). Similarly, all constant terms in equation (6)
cancel; indeed, when $j=0$, using the binomial formula, we have that
\[
-\sum\limits_{k=1}^n\binom{n}{k}(-1)^k-1=0.
\]
For $1\leq j\leq n-1$, in order to examine the terms associated to $x^j$, we analyze
\[
-\sum\limits_{k=j}^n\binom{n}{k}\binom{k}{j}(-1)^kx^j+\binom{n}{j}(1+(-1)^j)x^j.
\]
Since for $r\in\mathbb{Z}$,
\[
\binom{n}{j}(1+r)^{n-j}=\sum\limits_{k=j}^n\binom{n}{k}\binom{k}{j}r^{k-j},
\]
We have that
\[
-\sum\limits_{k=j}^n\binom{n}{k}\binom{k}{j}(-1)^kx^j+\binom{n}{j}(1+(-1)^j)x^j=\binom{n}{j}(1+(-1)^j)x^j.
\]
Additionally, if $j$ is odd, $\binom{n}{j}(1+(-1)^j)x^j=0$.  Therefore, equation (\ref{H4}) simplifies to
\[
2\sum\limits_{\substack{j=2\\j\ even}}^{n-1}\binom{n}{j}x^jf(1)=0.
\]
We set
\[
p(x)=2\sum\limits_{\substack{j=2\\j even}}^{n-1}\binom{n}{j}x^j.
\]
It is clear that
\[
p(x)=\left\{
\begin{aligned}
n(n-1)x^{n-2}&+\cdots +n(n-1)x^2,\quad\mbox{if $n$ is even};\\
2nx^{n-1}&+\cdots +n(n-1)x^2, \quad\mbox{if $n$ is odd}
\end{aligned}
\right.
\]
and
\[
p(\alpha)f(1)=0
\]
for all $\alpha\in\mathcal{N}_{n}$. Since char$(\mathcal{D})>n$ or char$(\mathcal{D})=0$, we get from Lemma \ref{L2.1} that $f(1)=0$. We finally remark that the statement (c) can be proved by the same arguments as in the proof of Lemma \ref{L2.3}(c).
\end{proof}

Using Lemma \ref{L2.4} and the same arguments as that of Theorem \ref{C}, we can give

\begin{proof}[The proof of Theorem \ref{T1}]

In view of Lemma \ref{L2.4}(b), we note that $f(1)=0$. Let $x\in \mathcal{D}^{\times}$. We will calculate $f((x+1)^2)$ in two different ways. First, using Lemma \ref{L2.4}(c), we have that
\begin{eqnarray*}
\begin{split}
f((x+1)^2)&=\left(-\sum\limits_{k=1}^n\binom{n}{k}(-1)^k(x+1)^k+(x+1)^n\right)f(x+1)\\
&=\left(-\sum\limits_{k=1}^n\sum\limits_{j=0}^k\binom{n}{k}\binom{k}{j}(-1)^kx^j+\sum\limits_{j=0}^n\binom{n}{j}x^j\right)f(x).
\end{split}
\end{eqnarray*}
Alternatively, using again (c) in Lemma \ref{L2.4}, we have
\begin{eqnarray*}
\begin{split}
f((x+1)^2)&=f(x^2)+2f(x)\\
&=\left(-\sum\limits_{j=1}^n\binom{n}{j}(-1)^jx^j+x^n\right)f(x)+2f(x).
\end{split}
\end{eqnarray*}
Equating, we can see that we have
\[
\left(-\sum\limits_{k=1}^n\sum\limits_{j=0}^k\binom{n}{k}\binom{k}{j}(-1)^kx^j+\sum\limits_{j=1}^n\binom{n}{j}(1+(-1)^j)x^j-x^n-1\right)f(x)=0.
\]
Using the same arguments as in the proof of Lemma \ref{L2.4}(b) we can obtain
\[
2\sum\limits_{\substack{j=2\\j\  even}}^{n-1}\binom{n}{j}x^jf(x)=0.
\]
We set
\[
p(x)=2\sum\limits_{\substack{j=2\\j\  even}}^{n-1}\binom{n}{j}x^j.
\]
It follows from the last relation that
\[
p(x)f(x)=0
\]
for all $x\in \mathcal{D}$. It is clear that
\[
p(x)=\left\{
\begin{aligned}
n(n-1)x^{n-2}&+\cdots +n(n-1)x^2,\quad\mbox{if $n$ is even};\\
2nx^{n-1}&+\cdots +n(n-1)x^2, \quad\mbox{if $n$ is odd}.
\end{aligned}
\right.
\]
Since char$(\mathcal{D})>n$ or char$(\mathcal{D})=0$, we get from Lemma \ref{L2.2} that $f=0$. This immediately implies that $g=0$, obtaining the desired result.
\end{proof}

\section{The proof of Theorem \ref{T2}}

Using Theorem \ref{T1}, Lemma \ref{L2.1}, and some arguments in the proof of Theorem \ref{T1.3}, we give the following

\begin{proof}[The proof of Theorem \ref{T2}]
By our hypothesis we have
\begin{equation}\label{e3.1}
f(X)+X^ng(X^{-1})=0
\end{equation}
for all $X\in \mathcal{R}^{\times}$. Replacing $X$ by $X^{-1}$ in (\ref{e3.1}), we get
\[
f(X^{-1})+X^{-n}g(X)=0
\]
for all $X\in \mathcal{R}^{\times}$. That is
\begin{equation}\label{e3.2}
g(X)+X^nf(X^{-1})=0
\end{equation}
for all $X\in \mathcal{R}^{\times}$. In view of both (\ref{e3.1}) and (\ref{e3.2}) we note that $f$ and $g$ have the same properties. Let
\[
f(X)=\sum\limits_{1\leq i,j\leq m}F_{ij}(X)E_{ij}
\]
and
\[
g(X)=\sum\limits_{1\leq i,j\leq m}G_{ij}(X)E_{ij}.
\]
Let $\delta_{ij}:\mathcal{D}\rightarrow \mathcal{D}$, $1\leq i,j\leq m$ be an
additive mapping such that $\delta_{ij}(\alpha)=F_{ij}(\alpha I)$ for all $\alpha\in \mathcal{D}$. Let $\tau_{ij}:\mathcal{D}\rightarrow \mathcal{D}$, $1\leq i,j\leq m$ be an
additive mapping such that $\tau_{ij}(\alpha)=G_{ij}(\alpha I)$ for all $\alpha\in \mathcal{D}$.  We have
\[
f(\alpha I)+\alpha^ng(\alpha^{-1} I)=0
\]
for all $\alpha\in \mathcal{D}^{\times}$. Hence
\[
\delta_{ij}(\alpha)+\alpha^n\tau_{ij}(\alpha^{-1})=0
\]
for all $\alpha\in \mathcal{D}^{\times}$. Thus, in view of Theorem \ref{T1}, we have $\delta_{ij}=\tau_{ij}=0$ for $1\leq i,j\leq m$. Hence $f(\alpha I)=0=g(\alpha I)$ for all $\alpha\in \mathcal{D}$. We now claim that $f=0=g$ by the following several steps:\\

\emph{Step 1.} We claim that
\[
f(X^2)=\left(\sum\limits_{k=1}^{n-1}\binom{n}{k}X^k\right)f(X)
\]
and
\[
g(X^2)=\left(\sum\limits_{k=1}^{n-1}\binom{n}{k}X^k\right)g(X)
\]
for all $X\in \mathcal{R}^{\times}$ with $X+I\in \mathcal{R}^{\times}$.

For $X\in \mathcal{R}^{\times}$ with $X+I\in \mathcal{R}^{\times}$, we note that
\[
(X^{-1}-(X+I)^{-1})^{-1}=X(X+I).
\]
In view of (\ref{e3.2}) we have
\begin{eqnarray*}
\begin{split}
0&=g(X^{-1}-(X+I)^{-1})+(X^{-1}-(X+I)^{-1})^nf(X(X+I))\\
&=g(X^{-1}-(X+I)^{-1})+(X(X+I))^{-n}(f(X^2)+f(X))\\
&=g(X^{-1})-g((X+I)^{-1})+(X(X+I))^{-n}f(X^2)+(X(X+I))^{-n}f(X)\\
&=-X^{-n}f(X)+(X+I)^{-n}f(X+I)+(X(X+I))^{-n}f(X^2)+(X(X+I))^{-n}f(X),\\
&=(-X^{-n}+(X+I)^{-n}+(X(X+I))^{-n})f(X)+(X(X+I))^{-n}f(X^2),
\end{split}
\end{eqnarray*}
which implies
\begin{eqnarray*}
\begin{split}
f(X^2)&=-(X(X+I))^{n}(-X^{-n}+(X+I)^{-n}+(X(X+I))^{-n})f(X)\\
&=(((X+I)^n-X^n-I)f(X)\\
&=\left(\sum\limits_{k=1}^{n-1}\binom{n}{k}X^k\right)f(X).
\end{split}
\end{eqnarray*}
Similarly, we get from (\ref{e3.1}) that
\[
g(X^2)=\left(\sum\limits_{k=1}^{n-1}\binom{n}{k}X^k\right)g(X).
\]

\emph{Step 2.} We claim that
\[
f(X)=\frac{1}{3}\left(\sum\limits_{k=1}^{2n}\binom{2n}{k}\right)Xf(X)
\]
and
\[
g(X)=\frac{1}{3}\left(\sum\limits_{k=1}^{2n}\binom{2n}{k}\right)Xg(X)
\]
for all $X^2=X\in \mathcal{R}$. In particular, $f(E_{ii})=0=g(E_{ii})$, $i=1,\ldots,m$.

Let $X\in \mathcal{R}$ be an idempotent. Then $(I+X)^{-1}=I-\frac{1}{2}X$. In view of (\ref{e3.1}) we
have
\[
f(I+X)+(I+X)^ng(I-\frac{1}{2}X)=0,
\]
this implies
\begin{equation}\label{e3.3}
f(X)-\frac{1}{2}(I+X)^ng(X)=0.
\end{equation}
Similarly, we get from (\ref{e3.2}) that
\begin{equation}\label{e3.4}
g(X)-\frac{1}{2}(I+X)^nf(X)=0.
\end{equation}
It follows from both (\ref{e3.3}) and (\ref{e3.4}) that
\begin{eqnarray*}
\begin{split}
f(X)&=(\frac{1}{2}(I+X)^n)^2f(X)\\
&=\frac{1}{4}\left(\sum\limits_{k=0}^{2n}\binom{2n}{k}X^k\right)f(X)\\
&=\frac{1}{4}\left(\sum\limits_{k=1}^{2n}\binom{2n}{k}X+I\right)f(X).
\end{split}
\end{eqnarray*}
Since char$(\mathcal{D})>n\geq 3$ or char$(\mathcal{D})=0$, we get from the last relation that
\[
f(X)=\frac{1}{3}\left(\sum\limits_{k=1}^{2n}\binom{2n}{k}\right)Xf(X).
\]
Similarly, we have
\[
g(X)=\frac{1}{3}\left(\sum\limits_{k=1}^{2n}\binom{2n}{k}\right)Xg(X).
\]
In particular
\[
0=f(I)=\sum\limits_{i=1}^mf(E_{ii})=\frac{1}{3}\sum\limits_{i=1}^m\left(\sum\limits_{k=1}^{2n}\binom{2n}{k}\right)E_{ii}f(E_{ii}).
\]
We get that
\[
f(E_{ii})=\frac{1}{3}\left(\sum\limits_{k=1}^{2n}\binom{2n}{k}\right)E_{ii}f(E_{ii})=0
\]
for $i=1,\ldots,m$. Similarly, we have
\[
g(E_{ii})=0
\]
for $i=1,\ldots,m$.

\emph{Step 3.} We claim that $Xf(X)=0=Xg(X)$ for all $X\in\mathcal{R}$ with $X^2=0$.

Note that $(I+X)^{-1}=I-X$. In view of (\ref{e3.1}) we
have
\[
f(I+X)+(I+X)^{n}g(I-X)=0,
\]
this implies
\begin{equation}\label{e3.6}
f(X)-(I+nX)g(X)=0.
\end{equation}
Similarly, in view of (\ref{e3.2}) we get
\begin{equation}\label{e3.7}
g(X)-(1+nX)f(X)=0.
\end{equation}
We get from both (\ref{e3.6}) and (\ref{e3.7}) that
\[
f(X)-(I+nX)^2f(X)=0.
\]
This implies
\[
2nXf(X)=0.
\]
Since char$(\mathcal{D})>n$ or char$(\mathcal{D})=0$, we obtain $Xf(X)=0$. Similarly, we have $Xg(X)=0$.

\emph{Step 4.} For any $\alpha,\beta\in \mathcal{D}$, we set $\alpha\circ\beta=\alpha\beta+\beta\alpha$. We claim that
\[
f((\alpha\circ \beta)E_{ij})=\left(\sum\limits_{k=1}^{n-1}\binom{n}{k}\alpha^k\right)f(\beta E_{ij})
\]
and
\[
g((\alpha\circ \beta)E_{ij})=\left(\sum\limits_{k=1}^{n-1}\binom{n}{k}\alpha^k\right)g(\beta E_{ij})
\]
for all $\alpha,\beta\in \mathcal{D}$ and $i\neq j$ with $\alpha\neq -1,0$.

We set
\[
X=\alpha I+\beta E_{ij}.
\]
Then $X$, $X+I$ are invertible in $\mathcal{R}$.  Since $(\beta E_{ij})^2=0$ for $i\neq j$, using Step 3, we have
\[
E_{ij}f(\beta E_{ij})=0.
\]
Thus, invoking Step 1, we get
\begin{eqnarray*}
\begin{split}
f(X^2)&=\left(\sum\limits_{k=1}^{n-1}\binom{n}{k}X^k\right)f(X)\\
&=\left(\sum\limits_{k=1}^{n-1}\binom{n}{k}(\alpha I+\beta E_{ij})^k\right)f(\alpha I+\beta E_{ij})\\
&=\left(\sum\limits_{k=1}^{n-1}\binom{n}{k}(\alpha^k I+\gamma E_{ij})\right)f(\beta E_{ij})\\
&=\left(\sum\limits_{k=1}^{n-1}\binom{n}{k}\alpha^k\right)f(\beta E_{ij}),
\end{split}
\end{eqnarray*}
for some $\gamma\in \mathcal{D}$. Also
\begin{eqnarray*}
\begin{split}
f(X^2)&=f((\alpha I+\beta E_{ij})^2)\\
&=f(\alpha^2 I+(\alpha\circ\beta)E_{ij})\\
&=f((\alpha\circ\beta)E_{ij}).
\end{split}
\end{eqnarray*}
Thus, we get
\[
f(\alpha\circ \beta)E_{ij})=\left(\sum\limits_{k=1}^{n-1}\binom{n}{k}\alpha^k\right)f(\beta E_{ij}).
\]
Similarly, we get
\[
g(\alpha\circ \beta)E_{ij})=\left(\sum\limits_{k=1}^{n-1}\binom{n}{k}\alpha^k\right)g(\beta E_{ij}).
\]

\emph{Step 5.} We claim that $f(\mathcal{D}E_{ii})=0=g(\mathcal{D}E_{ii})$ for $i=1,\ldots,m$.

For any $\alpha\in \mathcal{D}$, suppose first that $\alpha\in \mathcal{Z}\cdot 1$. In view of Step 2 we note that
\[
f(\alpha E_{ii})=\alpha f(E_{ij})=0
\]
for $i=1,\ldots,m$. Suppose next that $\alpha\not\in \mathcal{Z}\cdot 1$.  Since both $I+\alpha E_{ii}$ and $2I+\alpha E_{ii}$ are invertible in $\mathcal{R}$, and $f(I)=0$,
by Step 1, we get
\begin{eqnarray*}
\begin{split}
f((I+\alpha E_{ii})^2)&=\left(\sum\limits_{k=1}^{n-1}\binom{n}{k}(I+\alpha E_{ii})^k\right)f(I+\alpha E_{ii})
\end{split}
\end{eqnarray*}
implying
\[
2f(\alpha E_{ii})+f(\alpha^2E_{ii})=\left(\sum\limits_{k=1}^{n-1}\binom{n}{k}(I+\alpha E_{ii})^k\right)f(\alpha E_{ii}).
\]
Thus, we have
\begin{equation}\label{e3.8}
f(\alpha^2E_{ii})=\left(\sum\limits_{k=1}^{n-1}\binom{n}{k}(I+\alpha E_{ii})^k-2I\right)f(\alpha E_{ii}).
\end{equation}
Since $\alpha\not\in \mathcal{Z}\cdot 1$, clearly $\alpha+1\not\in \mathcal{Z}\cdot 1$. Replacing $\alpha$ by $\alpha+1$ in (\ref{e3.8}), we get
\begin{equation}\label{e3.9}
f((\alpha+1)^2E_{ii})=\left(\sum\limits_{k=1}^{n-1}\binom{n}{k}(I+(\alpha+1) E_{ii})^k-2I\right)f((\alpha+1) E_{ii}).
\end{equation}
Note that $f(E_{ii})=0$. Taking the difference of (\ref{e3.9}) and (\ref{e3.8}), we get
\begin{eqnarray*}
\begin{split}
2f(\alpha E_{ii})&=\left(\sum\limits_{k=1}^{n-1}\binom{n}{k}((I+(\alpha+1) E_{ii})^k-(I+\alpha E_{ii})^k)\right)f(\alpha E_{ii})\\
&=\left(\sum\limits_{k=1}^{n-1}\binom{n}{k}\left(\sum\limits_{s=0}^k\binom{k}{s}(\alpha+1)^{s}E_{ii}^s
-\sum\limits_{s=0}^k\binom{k}{s}\alpha^{s}E_{ii}^s\right)\right)f(\alpha E_{ii})\\
&=\left(\sum\limits_{k=1}^{n-1}\binom{n}{k}\left(\sum\limits_{s=1}^k\binom{k}{s}\left((\alpha+1)^s-\alpha^s\right)\right)\right)E_{ii}f(\alpha E_{ii}),
\end{split}
\end{eqnarray*}
this implies
\[
f(\alpha E_{ii})=\frac{1}{2}\left(\sum\limits_{k=1}^{n-1}\binom{n}{k}\left(\sum\limits_{s=1}^k\binom{k}{s}((\alpha+1)^s-\alpha^s)\right)\right)E_{ii}f(\alpha E_{ii}).
\]
Thus, we have
\begin{eqnarray*}
\begin{split}
0&=f(\alpha I)=\sum\limits_{i=1}^mf(\alpha E_{ii})\\
&=\frac{1}{2}\sum\limits_{i=1}^m\left(\sum\limits_{k=1}^{n-1}\binom{n}{k}\left(\sum\limits_{s=1}^k\binom{k}{s}((\alpha+1)^s-\alpha^s)\right)\right)E_{ii}f(\alpha E_{ii}).
\end{split}
\end{eqnarray*}
This implies that
\[
f(\alpha E_{ii})=\frac{1}{2}\left(\sum\limits_{k=1}^{n-1}\binom{n}{k}\left(\sum\limits_{s=1}^k\binom{k}{s}((\alpha+1)^s-\alpha^s)\right)\right)E_{ii}f(\alpha E_{ii})=0
\]
for all $i=1,\ldots,m$. Similarly, we have $g(\alpha E_{ii})=0$ for all $i=1,\ldots,m$.

\emph{Step 6.} For any $\alpha\in F$ with $\alpha\neq -1,0$, where $F$ is the centre of $\mathcal{D}$, $\beta\in \mathcal{D}$, and $i\neq j$, we claim that
\[
f(\alpha \beta E_{ij})=\left(\frac{1}{2}\sum\limits_{k=1}^{n-1}\binom{n}{k}\alpha^k\right)f(\beta E_{ij})
\]
and
\[
g(\alpha \beta E_{ij})=\left(\frac{1}{2}\sum\limits_{k=1}^{n-1}\binom{n}{k}\alpha^k\right)g(\beta E_{ij}).
\]
In view of Step 4, we have
\[
f((\alpha\circ\beta) E_{ij})=\left(\sum\limits_{k=1}^{n-1}\binom{n}{k}\alpha^k\right)f(\beta E_{ij})
\]
for all $\alpha \in F$ and $\beta\in \mathcal{D}$. Recall that char$(\mathcal{D})\neq 2$. Hence
\[
f((\alpha\beta) E_{ij})=\frac{1}{2}\left(\sum\limits_{k=1}^{n-1}\binom{n}{k}\alpha^k\right)f(\beta E_{ij}).
\]
Similarly, we have
\[
g((\alpha\beta) E_{ij})=\frac{1}{2}\left(\sum\limits_{k=1}^{n-1}\binom{n}{k}\alpha^k\right)g(\beta E_{ij}).
\]

\emph{Step 7.} We claim $f(X)=0=g(X)$ for all $X\in M_m(F)$, where $F$ is the centre of $\mathcal{D}$.

For any $X=\sum\limits_{1\leq i,j\leq m}\alpha_{ij}E_{ij}\in M_m(F)$, we set
\[
\left\{
\begin{aligned}
\beta_{ij}&=\alpha_{ij},\quad\mbox{if $\alpha_{ij}=-1,0$};\\
\beta_{ij}&=\frac{1}{2}\sum\limits_{k=1}^{n-1}\binom{n}{k}\alpha_{ij}^k,\quad\mbox{if $\alpha_{ij}\neq -1$ and $\alpha_{ij}\neq 0$}.
\end{aligned}
\right.
\]
In view of Step 6, we get
\[
f(X)=\sum\limits_{1\leq i,j\leq m}f(\alpha_{ij}E_{ij})=\sum\limits_{1\leq i,j\leq m}\beta_{ij}f(E_{ij}).
\]
Note that $f(E_{ii})=0$ for all $i=1,\ldots,m$. It is sufficient to show $f(E_{ij})=0$ for all $i\neq j$.

For any $\alpha\in \mathcal{N}_{n-1}$, we note that $\alpha I+E_{ij}$ and $(\alpha+1)I+E_{ij}$ are invertible in $\mathcal{R}$. In view of Step 1, we get
\begin{eqnarray*}
\begin{split}
f((\alpha I+E_{ij})^2)&=\left(\sum\limits_{k=1}^{n-1}\binom{n}{k}(\alpha I+E_{ij})^k\right)f(\alpha I+E_{ij})\\
&=\left(\sum\limits_{k=1}^{n-1}\binom{n}{k}(\alpha^k I+\alpha^{k-1}kE_{ij})\right)f(E_{ij}),
\end{split}
\end{eqnarray*}
this implies
\[
2\alpha f(E_{ij})=\left(\sum\limits_{k=1}^{n-1}\binom{n}{k}(\alpha^k I+\alpha^{k-1}kE_{ij})\right)f(E_{ij}).
\]

In view of Step 3, we note that $E_{ij}f(E_{ij})=0$. We get from the last relation that
\[
\left(\sum\limits_{k=1}^{n-1}\binom{n}{k}\alpha^k-2\alpha\right)f(E_{ij})=0.
\]
Since char$(\mathcal{D})>n$ or char$(\mathcal{D})=0$ we get from the last relation that
\[
\left(\sum\limits_{k=1}^{n-1}\binom{n}{k}\alpha^{k-1}-2\right)f(E_{ij})=0.
\]
We set
\[
p(x)=\sum\limits_{k=1}^{n-1}\binom{n}{k}x^{k-1}-2=nx^{n-2}+\cdots +(-2).
\]
It follows that
\[
p(\alpha)f(E_{ij})=0
\]
for all $\alpha\in \mathcal{N}_{n-1}$. In view of Lemma \ref{L2.1} we get that $f(E_{ij})=0$. Hence $f(X)=0$. Similarly, we can obtain that $g(X)=0$.

\emph{Step 8.} We claim that $f(\mathcal{D}E_{ij})=0=g(\mathcal{D}E_{ij})$ for all $i\neq j$.

Let $\alpha\in \mathcal{D}^{\times}$. Let $X\in\{E_{ij},-E_{ij}\}$. Note that
\[
(\alpha I+X)^{-1}=\alpha^{-1}I-\alpha^{-2}X.
\]
Invoking our hypothesis, we get
\[
f(\alpha I+X)+(\alpha I+X)^ng(\alpha^{-1}I-\alpha^{-2}X)=0.
\]
Since $f(X)=0$, the above equation reduces to
\[
(\alpha I+X)^ng(-\alpha^2X)=0,
\]
this implies
\begin{equation}\label{e3.12}
(\alpha I+nX)g(\alpha^{-2}X)=0.
\end{equation}
Replacing $X$ by $-X$ in (\ref{e3.12}), we have
\begin{equation}\label{e3.13}
(\alpha I-nX)g(\alpha^{-2}X)=0.
\end{equation}
Adding (\ref{e3.12}) and (\ref{e3.13}),  we have $2\alpha g(\alpha^{-2}X)=0$ and so $g(\alpha^{-2}X)=0$. That is, $g(\beta^2 X)=0$ for all $\beta\in \mathcal{D}$. Replacing $\beta$ by $\beta+1$, we get
\[
g((\beta^2+2\beta+1)X)=0.
\]
Since $g(X)=0$ and $g(\beta^2X)=0$ we have $2g(\beta X)=0$ and so $g(\beta X)=0$.  we get $g(\mathcal{D}E_{ij})=0$ for any $1\leq i,j\leq m$. Similarly, $f(\mathcal{D}E_{ij})=0$ for any $1\leq i,j\leq m$.

Up to now we have proved that $f(\mathcal{D}E_{ij})=0=g(\mathcal{D}E_{ij})$ for any $1\leq i,j\leq m$. Note that we can write $\mathcal{R}=\sum\limits_{1\leq i,j\leq m}\mathcal{D}E_{ij}$. We get $f(\mathcal{R})=0=g(\mathcal{R})$, as desired. This completes the proof of the result.
\end{proof}

\end{document}